\documentclass[12pt]{amsart}
\usepackage{amscd}
\usepackage{amssymb}
\usepackage{a4wide}
\usepackage{amstext}
\usepackage{amsthm}
\usepackage{mathrsfs}
\usepackage{relsize}
\usepackage[final]{hyperref}
\hypersetup{unicode= false, colorlinks=true, linkcolor=blue,
anchorcolor=blue, citecolor=green, filecolor=red, menucolor=blue,
pagecolor=blue, urlcolor=blue} \linespread{1.2}
\numberwithin{equation}{section}

\newcommand{\RR}{\mathbb{R}}
\newcommand{\Z}{\mathbb{Z}}

\renewcommand{\phi}{\varphi}

\newcommand{\ima}{{\rm Im}\,}

\newcommand{\vep}{\varepsilon}
\newcommand{\co}{\mathbb{C}}
\newcommand{\N}{\mathbb{N}}

\newcommand{\R}{\mathbb{R}}

\newcommand{\ttt}{\mathcal{T}}

\newtheorem{Thm}{Theorem}[section]
\newtheorem{theorem}[Thm]{Theorem}
\newtheorem{lemma}[Thm]{Lemma}
\newtheorem{proposition}[Thm]{Proposition}
\newtheorem{corollary}[Thm]{Corollary}
\newtheorem{remark}[Thm]{Remark}

\textwidth=16cm
\oddsidemargin=0cm
\topmargin=-1.0cm
\textheight=670pt

\begin{document}
\sloppy

\title[]{Zeros of meromorphic functions of the form $\sum\limits_n \dfrac{c_n}{(z-t_n)^2}$}

\author{Anton Baranov}
\address{Department of Mathematics and Mechanics\\ St. Petersburg State University\\
St. Petersburg, Russia}
\email{anton.d.baranov@gmail.com} 
\author{Vladimir Shemyakov}
\address{Department of Mathematics and Mechanics\\ St. Petersburg State University\\
St. Petersburg, Russia}
\email{vladimir.v.shemyakov@gmail.com}





\keywords{Cauchy kernel, meromorphic function}

\subjclass[2000]{Primary 30H20; Secondary 30D10, 30E05, 42C15,  94A20}

\thanks{The results of Sections 2 and 3 were obtained with the support of Ministry of Science 
and Higher Education of the Russian Federation (agreement No 075-15-2021-602) and of  
Theoretical Physics and Mathematics Advancement Foundation ``BASIS''.
The results of Sections 4 and 5 were 
obtained with the support of the Russian Science Foundation grant 
19-71-30002. }

\begin{abstract} {We study zeros distribution for meromorphic functions of the form
$\sum\limits_n \dfrac{c_n}{(z-t_n)^2}$, where $\sum_n \dfrac{|c_n|}{|t_n|^2} <\infty$. 
We prove an analog of a well-known theorem of Keldysh and discuss a relation
between zero-free functions of this form and second order differential equtions with polynomial coefficients. } 
\end{abstract}

\maketitle


\section{Introduction and main results}
\label{section1}

Representation of meromorphic functions as sums of simple fractions 
and, in particular, the distribution of zeros of such sums are among the basic problems of analysis.  
One of the classical results concerning this problem is due to M.V.~Keldysh (see \cite{keld} or \cite[Chapter V, Theorem 6.2]{gold}).
In what follows we always assume that $\ttt=\{t_n\}_{n=1}^\infty$  is a sequence of distinct complex numbers such that
$|t_n| \to \infty$. Since the problem is translation invariant, we will assume without loss of generality that $0\notin \ttt$
to simplify the formulas. Then the Keldysh theorem can be stated as follows:
if $c_n\in\co$ satisfy
$$
\sum_{n=1}^\infty |c_n| <\infty, \qquad \sum_{n=1}^\infty c_n \ne 0,
$$
and $\ttt$ has finite order (or, at least, finite lower order), then the function
\begin{equation}
f(z) = \sum_n \frac{c_n}{z-t_n} 
\label{mer}
\end{equation}
has infinitely many zeros. Moreover, $\delta(0,f)=0$, where $\delta(0,f)$ denotes the defect of the value 0, 
and the orders of $N(r,1/f)$ and $T(r,f)$ coincide. Here we use the standard notation of the Nevanlinna theory, 
see the formal definitions in Section \ref{prel}. Roughly speaking, this means that $f$ has as many zeros as it has poles. 

Condition $\sum_{n=1}^\infty c_n \ne 0$  in the Keldysh theorem is essential. In the case when $\sum_{n=1}^\infty |c_n| <\infty$ but
$\sum_{n=1}^\infty c_n =0$ the function \eqref{mer} can have infinitely many zeros or no zeros at all.
Note also that the function $f$ is well-defined under a natural milder restriction on the coefficients
\begin{equation}
\sum_n \frac{|c_n|}{|t_n|} <\infty.
\label{conv}
\end{equation}

In 1993 J.~Clunie, A.~Eremenko and J.~Rossi \cite{cer} posed the following problem:
\medskip
\\
{\bf Conjecture} (Clunie, Eremenko, Rossi, 1993). {\it If $c_n>0$ satisfy \eqref{conv}, then the function \eqref{mer}
has infinitely many zeros. }
\medskip

In spite of numerous deep results concerning this problem (see \cite{cer, el, lang, lr}), it is still open in general. 

In \cite{lr} J.K.~Langley and J.~Rossi considered a similar problem for the functions of the form
\begin{equation}
f_m(z) = \sum_{n=1}^\infty \frac{c_n}{(z-t_n)^m}, \qquad \sum_n \frac{|c_n|}{|t_n|^m} <\infty,
\label{mer1}
\end{equation}
where $m\in\N$. It turned out that the cases $m=1$ and $m=2$ are special, while for $m\ge 3$ the problem becomes easier. 
Namely, if $m\ge 3$, then the function $f_m$ has infinitely many zeros, and moreover, $\delta(0,f_m)<1$
\cite[Theorem 1.9]{lr}. 

A simple example shows that this is no longer true when $m=2$. Let 
\begin{equation}
\label{zfr}
f(z) = \frac{a}{\sin^2(bz-c)} = \frac{a}{b^2}\sum_{n\in\mathbb{Z}} \frac{1}{(z-\frac{\pi n+c}{b})^2},
\end{equation}
where $a, b, c\in\co$, $a, b\ne 0$. Then $f$ is of the form \eqref{mer1} with $m=2$ and has no zeros. However, the result
is still true under stronger conditions. 
Langley and Rossi \cite[Theorem 1.10]{lr} showed that if $\ttt$ has a finite exponent of convergence (meaning that $\sum_n |t_n|^{-L}
<\infty$ for some $L>0$) and $c_n$ satisfy \eqref{conv}, then $\delta(0,f)<1$ when
\begin{equation}
f(z) = \sum_{n=1}^\infty \frac{c_n}{(z-t_n)^2}.
\label{mer2}
\end{equation}

We prove two results which complement those of \cite{lr}. Our first observation is that one can prove 
an analog of the Keldysh theorem for the functions of the form \eqref{mer2}. In what follows we always assume that
all coefficients $c_n$ in \eqref{mer2} are nonzero.
 
\begin{theorem}
\label{main1}
Let $\ttt=\{t_n\}$ have finite convergence exponent. 
If  $c_n\in\co$ satisfy $\sum_{n=1}^\infty |c_n| <\infty$ and $\sum_{n=1}^\infty c_n \ne 0$, then for the
function \eqref{mer2} we have $\delta(0,f)=0$, and the orders of $N(r,1/f)$, $N(r,f)$ and $T(r,f)$ coincide, that is, 
$$
\limsup_{r\to \infty} \frac{\log N(r,1/f)}{\log r} = \limsup_{r\to \infty} \frac{\log N(r,f)}{\log r} = 
\limsup_{r\to \infty} \frac{\log T(r,f)}{\log r}.
$$
\end{theorem}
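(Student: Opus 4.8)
The plan is to isolate the structural feature that plays the role of the nonvanishing of $\sum c_n$ in the classical Keldysh theorem. Writing $S=\sum_n c_n\neq 0$ and setting $h(z)=z^2f(z)$, I would first show that $h(z)\to S$ as $z\to\infty$ along any approach that stays away from the poles $t_n$. This follows from the decomposition
\[
h(z)-S=\sum_n c_n\Bigl(\bigl(\tfrac{z}{z-t_n}\bigr)^2-1\Bigr)=\sum_n c_n\,\frac{t_n(2z-t_n)}{(z-t_n)^2},
\]
by splitting the sum according to whether $|t_n|\le r/2$, $r/2<|t_n|\le 2r$, or $|t_n|>2r$ (with $r=|z|$): the first and third groups tend to $0$ because $\sum_n|c_n|<\infty$ and $\sum_{|t_n|>\eps r}|c_n|\to 0$, while the middle annular group can be large only on arcs lying close to the poles. (One might instead hope to deduce the theorem from the classical Keldysh result applied to $g(z)=\sum_n c_n/(z-t_n)$, since $f=-g'$ and $\sum|c_n|<\infty$ is exactly Keldysh's hypothesis; but the passage from the zero distribution of $g$ to that of $g'$ is not transparent, so I would argue with $f$ directly.) Note also that, since every $c_n\neq 0$, each $t_n$ is a double pole, so $n(r,f)=2\,n(r,\ttt)$ and $N(r,f)$ has order equal to the convergence exponent $\rho$ of $\ttt$; I will assume $\rho>0$, the degenerate case requiring only minor modifications.

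Next I would pin down the order of $T(r,f)$. Since $T(r,f)\ge N(r,f)$, its order is at least $\rho$. For the reverse bound I would estimate $m(r,f)$ by splitting the defining series at $|t_n|=2r$: the tail $|t_n|>2r$ contributes a bounded term on $|z|=r$, while the finitely many remaining terms give, via $\log^+\bigl(\sum_{j=1}^k a_j\bigr)\le\log k+\sum_{j=1}^k\log^+a_j$ and the standard estimate for $\frac1{2\pi}\int_0^{2\pi}\log^+|re^{i\theta}-t_n|^{-1}\,d\theta$, the bound $m(r,f)=O\bigl(n(2r,\ttt)\log r\bigr)=O(r^{\rho+\eps})$. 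Hence the order of $T(r,f)=m(r,f)+N(r,f)$ equals $\rho$, which already yields the equality of the orders of $N(r,f)$ and $T(r,f)$.

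The remaining assertions — $\delta(0,f)=0$ and the order of $N(r,1/f)$ being $\rho$ — I would reduce, through Jensen's formula, to two proximity estimates. Assuming (as we may) that $f(0)\neq 0,\infty$, Jensen gives
\[
N(r,1/f)=N(r,f)+\frac1{2\pi}\int_0^{2\pi}\log|f(re^{i\theta})|\,d\theta+O(1),
\]
and since $|f|=|h|/r^2$,
\[
\frac1{2\pi}\int_0^{2\pi}\log|f(re^{i\theta})|\,d\theta=m(r,h)-m(r,1/h)-2\log r+O(1),
\]
with $m(r,1/h)=m(r,1/f)+O(\log r)$ and $m(r,f)\le m(r,h)$. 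Consequently it suffices to prove that both $m(r,h)$ and $m(r,1/f)$ have order strictly less than $\rho$: since $N(r,f)$ has order exactly $\rho$, this yields $m(r,f)=o\bigl(N(r,f)\bigr)$ and $\frac1{2\pi}\int\log|f|\,d\theta=-2\log r+o\bigl(N(r,f)\bigr)$ along the radii realising the order $\rho$, whence $N(r,1/f)=N(r,f)(1+o(1))=T(r,f)(1+o(1))$ there. This gives $\delta(0,f)=0$ and makes the order of $N(r,1/f)$ equal to $\rho$.

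For $m(r,h)$ (the value $\infty$) the asymptotic $h\to S$ essentially does the work: $\log^+|h|$ is concentrated on the arcs where $|z|=r$ passes close to the poles, and because the poles are explicitly located at the $t_n$ one can bound the total contribution of these arcs and keep $m(r,h)$ of order below $\rho$; this is the more tractable of the two estimates. The genuinely hard part — the analog of the nontrivial half of Keldysh's theorem — is the bound on $m(r,1/f)$, the proximity to the value $0$. Here $\log^+(1/|f|)$ is large only near the zeros of $f$, whose location is a priori unknown, and the difficulty is precisely to rule out that these zeros (or clusters of poles) make $f$ much smaller than its generic size $\sim S/z^2$ on a large portion of $|z|=r$. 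I would control this by combining three ingredients: the a priori count $n(r,1/f)=O(r^{\rho+\eps})$ coming from $N(r,1/f)\le T(r,1/f)=T(r,f)+O(1)$; the boundedness of $h=z^2f$ away from the poles, which prevents the zeros of $h$ from being flat and hence confines to short arcs the set where $|f|\ll r^{-2}$; and a choice of radii $r$ outside a small exceptional set (most cleanly obtained after an integration in $r$), which spreads out the poles on $|z|=r$ and absorbs any clustering. Estimating the total contribution of the exceptional arcs should then give $m(r,1/f)$ of order $<\rho$ and complete the proof. I expect this last estimate, and in particular the taming of pole and zero clustering, to be the main obstacle.
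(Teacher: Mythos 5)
Your setup is sound and in fact coincides with the paper's starting point: the decomposition $z^2f(z)-\sum_k c_k=\sum_k 2c_kt_k/(z-t_k)+\sum_k c_kt_k^2/(z-t_k)^2$ is exactly the identity the authors use, and your reduction via Jensen's formula of both conclusions to proximity estimates on $m(r,h)$ and $m(r,1/f)$ is legitimate. But the proposal has a genuine gap at precisely the point you flag as "the main obstacle": the bound on $m(r,1/f)$ is never proved, only described, and the three ingredients you list do not suffice to produce it. The difficulty is not merely the measure of the exceptional arcs where $|z^2f-1|\ge 1/2$; it is that on those arcs $\log^+(1/|f|)$ can be enormous (near zeros of $f$), and the only a priori control of a meromorphic function on a set of small angular measure is in terms of its Nevanlinna characteristic \emph{at a larger radius} — this is the Edrei--Fuchs estimate \eqref{edre}, which the paper invokes. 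That estimate yields $m(r_j,1/f)\le \vep_j\,T(4r_j,1/f)$, a \emph{relative and recursive} bound, not the absolute bound $m(r,1/f)=O(r^{\rho-\eta})$ your reduction requires. Converting the recursive bound into the theorem is the actual heart of the matter: the paper does it by an iteration/geometric-growth argument (if $\delta(0,f)>0$, then $T(r_{j+1},f)\ge \frac{c}{\vep}T(r_j,f)$ along the good radii, forcing infinite order and contradicting Lemma \ref{ord}; the equality of the orders of $N(r,1/f)$ and $T(r,f)$ needs a second, denser-radii iteration via Proposition \ref{inte1}). Your proposal has no counterpart to this iteration, and without it the plan does not close.

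A second, related problem is that your strategy demands estimates valid for \emph{all} large $r$ (an order of $m(r,1/f)$ strictly below $\rho$), whereas the tools actually available — the $L^1$ bound of Proposition \ref{inte} for the second-order sum and the classical estimate \eqref{akir} for the first-order sum — only hold along a sparse sequence of good radii $r_j$ chosen by an averaging argument; the authors explicitly note they could not prove a uniform-in-$r$ analogue of \eqref{akir} for second-order sums. Indeed the paper never establishes, and does not need, a uniform order bound on $m(r,1/f)$; it works entirely on the sequence $r_j$ and exploits $r_{j+1}\ge 4r_j$ (respectively $r_{j+1}\ge(1+\delta)r_j$) together with $\log r_j\asymp j$ to run the iteration. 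So the correct repair of your argument is not to push harder on the uniform proximity estimate (which may well be out of reach), but to replace your Jensen-based reduction by the sequence-of-radii scheme: smallness of the bad arcs $E_j$ on the circles $|z|=r_j$, Edrei--Fuchs on $E_j$, and then the growth-iteration contradiction. As it stands, the central analytic content of the theorem is missing from the proposal.
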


Of course, our assumption on $c_n$ is much stronger than that of \cite[Theorem 1.10]{lr}. However, in our case
we can show that $\delta(0,f)$ is zero. This is not true in general. Let $g$ be an entire function with simple real zeros $t_n$ such that
$$
\frac{1}{g(z)} = \sum_n \frac{1}{g'(t_n)(z-t_n)}, \qquad  \sum_n \frac{1}{|g'(t_n)|}<\infty.
$$
This is a special case of a function of the so-called Krein class (for the definition see Section \ref{difeq}). 
E.g., one can take as $g$ the canonical product with zeros $n^{\alpha}$, $n\in\N$, $\alpha>2$.
Then $f = \big(\frac{1}{g}\big)' =-\frac{g'}{g^2}$ is a function of the form \eqref{mer2} with $\sum_n |c_n| <\infty$,
but $\sum_n c_n =0$. It is easy to see that $N(r, 1/f) \sim N(r,f)/2$ and $\delta(0,f) = 1/2$.
It seems to be an interesting question whether $1/2$ is the sharp upper bound for the defect in
\cite[Theorem 1.10]{lr}. 

Another approach is based on a relation between the functions of the form \eqref{mer2}
with only a finite number of zeros and the solutions of certain second order differential equations 
with polynomial coefficients. 

\begin{proposition}
\label{diff}
Let $f$ be a function of the form \eqref{mer2} with $\sum_n \frac{|c_n|}{|t_n|^2} <\infty$
and let $f$ be of finite order. 
Assume that $f$ has a finite set of zeros and let $P$ be a polynomial whose zeros coincide with zeros of $f$ counting
multiplicities. Then there exist an entire function $g$ such that $f=P/g^2$ and a polynomial $Q$ 
such that $g$ satisfies the differential equation
\begin{equation}
\label{fz}
Pg'' - P'g' +Qg =0.
\end{equation}
In particular, if $f$ has no zeros, then  $f=1/g^2$ and $g$ satisfies the differential equation
\begin{equation}
\label{nz}
g''+Qg=0
\end{equation}
for some polynomial $Q$.
\end{proposition}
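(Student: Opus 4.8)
The plan is to realize $f$ as $P/g^2$ for a suitable entire square root $g$, to read off a differential equation from the fact that $f$ has vanishing residues at every pole, and finally to use the finite order hypothesis to force the resulting coefficient to be a polynomial.

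First I would produce $g$. Since every $c_n\ne 0$, the function $f$ has a double pole at each $t_n$, and by hypothesis its only zeros are those of $P$, with matching multiplicities. Hence the meromorphic function $h=P/f$ has neither poles nor zeros except double zeros precisely at the points $t_n$; in particular $h$ is entire. Because $f$ has finite order and $P$ is a polynomial, $T(r,h)\le T(r,1/f)+O(\log r)=T(r,f)+O(\log r)$, so $h$ is of finite order. Its zero set $\{t_n\}$ then has finite convergence exponent, so the canonical product $g_0$ with simple zeros at the $t_n$ is entire of finite order, and $h/g_0^2$ is a zero-free entire function of finite order, hence equals $e^{R}$ for a polynomial $R$. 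Setting $g=g_0 e^{R/2}$ gives an entire function of finite order with $g^2=h$, simple zeros exactly at the $t_n$, and no other zeros. Thus $f=P/g^2$, which is the first assertion.

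Next I would extract the differential equation from the residue-free structure of $f$. The principal part of $f$ at $t_n$ is exactly $c_n/(z-t_n)^2$, so $\operatorname{Res}_{t_n}f=0$ for every $n$. Computing this residue from the representation $f=P/g^2$ (expanding $g$ to second order at its simple zero $t_n$, where $P(t_n)\ne 0$) yields $P'(t_n)g'(t_n)-P(t_n)g''(t_n)=0$, i.e. the entire function $\Phi:=Pg''-P'g'$ vanishes at every $t_n$. Since $g$ has simple zeros exactly at the $t_n$ and no other zeros, $\Phi/g$ is entire; putting $Q:=-\Phi/g$ gives $Pg''-P'g'+Qg=0$ by construction. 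It remains only to prove that this entire $Q$ is a polynomial.

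For the last step I would estimate growth. Writing $Q=-P\,g''/g+P'\,g'/g$ and using that $g$ has finite order, the lemma on the logarithmic derivative gives $m(r,g'/g)=O(\log r)$ and, via $g''/g=(g''/g')(g'/g)$, also $m(r,g''/g)=O(\log r)$; since $P,P'$ are polynomials this forces $m(r,Q)=O(\log r)$. As $Q$ is entire we have $N(r,Q)=0$, hence $T(r,Q)=O(\log r)$, and an entire function whose Nevanlinna characteristic is $O(\log r)$ is a polynomial. This is the step I expect to be the crux: the cancellation of the poles of $g'/g$ and $g''/g$ at the $t_n$, guaranteed by the vanishing residues, is exactly what lets one bound $Q$ by its proximity function alone. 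Finally, when $f$ has no zeros the polynomial $P$ is a nonzero constant, so $P'=0$ and the equation becomes $g''+(Q/P)g=0$; absorbing the constant into $g$ gives $f=1/g^2$ and $g''+Qg=0$ with $Q/P$ renamed $Q$.
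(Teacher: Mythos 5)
Your proof is correct and follows essentially the same route as the paper: write $P/f=g^2$ for an entire $g$ with simple zeros at the $t_n$, use the vanishing of the residue of $f$ at each $t_n$ to obtain $P'(t_n)g'(t_n)-P(t_n)g''(t_n)=0$ and hence the equation with the entire function $Q=P'\frac{g'}{g}-P\frac{g''}{g}$, and then conclude that $Q$ is a polynomial from $m\big(r,\frac{g'}{g}\big)=O(\log r)$ and $m\big(r,\frac{g''}{g}\big)=O(\log r)$ together with $N(r,Q)=0$. The only difference is that you make explicit the construction of the entire square root $g$ (canonical product times $e^{R/2}$), a step the paper simply asserts.
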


Differential equations of the form \eqref{nz} are very much studied. In particular, it is known that:

a) If ${\rm deg}\, Q=m$, then for any nonzero solution $g$ of \eqref{nz} its order $\rho(g)$ equals $\frac{m+2}{2}$.

b) If $Q(z) = \sum_{j=0}^m a_j z^j$ with $m\ge 1$ and $a_m\ne 0$, then all zeros $t_n$ of $g$ approach the critical rays
\begin{equation}
\label{appr1}
L_j = \Big\{\arg \Big(z +\frac{a_{m-1}}{ma_m}\Big)= \frac{2\pi j -\arg a_m}{m+2}\Big\}, \qquad j=0, \dots m+1,
\end{equation}
in a very strong sense. In particular, 
\begin{equation}
\label{appr}
{\rm dist}\, (t_n, \cup_j L_j) \to 0, \qquad n\to \infty. 
\end{equation}
Note that the zeros need not accumulate to each of the rays $L_j$. See \cite[Chapters 4, 5]{laine} and \cite{hei} for details.

Thus, we have several conditions sufficient for existence of zeros of $f$. 

\begin{corollary}
\label{fer}
If a function $f$ of the form \eqref{mer2} is of finite order and either 
the convergence exponent of $\{t_n\}$ is not in $\N\cup \big(\N+\frac{1}{2}\big)$ or 
$\{t_n\}$ do not satisfy \eqref{appr} for any family of rays $L_j$ of the form \eqref{appr1},
then $f$ has at least one zero. 
\end{corollary}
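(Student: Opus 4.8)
The plan is to prove the contrapositive: assuming that $f$ has \emph{no} zeros, I will show that neither alternative in the hypothesis can hold, i.e.\ that the convergence exponent of $\{t_n\}$ lies in $\N\cup(\N+\frac12)$ \emph{and} that $\{t_n\}$ satisfies \eqref{appr} for some family of rays of the form \eqref{appr1}. First I would feed the zero-free $f$ into Proposition~\ref{diff}: since $f$ has finite order and its zero set is empty (hence finite), the proposition gives an entire $g$ with $f=1/g^2$ satisfying \eqref{nz}, $g''+Qg=0$, for some polynomial $Q$. The function $g$ has finite order as well, because $T(r,f)=T(r,1/g^2)=2T(r,g)+O(1)$. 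The key structural remark is that the double poles of $f$ are exactly the points $t_n$, so the zeros of $g$ are precisely the $t_n$, each simple; in particular $g$ has infinitely many zeros. Writing $m=\deg Q$, property~(a) then gives $\rho(g)=\frac{m+2}{2}$.

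Next I would dispose of the second alternative. If $m\ge 1$, property~(b) asserts directly that the zeros of $g$, namely the $t_n$, satisfy \eqref{appr} for the ray family \eqref{appr1} attached to $Q$, so this alternative fails. The one case not literally covered by \eqref{appr1} is $m=0$, where $Q\equiv a_0\ne 0$ is constant and $g(z)=Ae^{i\sqrt{a_0}\,z}+Be^{-i\sqrt{a_0}\,z}$ with $A,B\ne 0$ (else $g$ would be zero-free, contradicting the existence of the poles $t_n$). Then the zeros of $g$ form an arithmetic progression lying on a single line $\ell$, and I would check that $\ell$ can be realized as two opposite rays of a degree-$2$ family \eqref{appr1}: choosing $\arg a_2$ to match the direction of $\ell$ and $a_1$ so that the common vertex $-a_1/(2a_2)$ lies on $\ell$ places two of the four rays \eqref{appr1} along $\ell$ (the other two carry no zeros, which \eqref{appr} permits). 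Thus in every case $\{t_n\}$ satisfies \eqref{appr} for some admissible family, and the second alternative fails.

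It remains to rule out the first alternative by showing that the convergence exponent $\tau$ of $\{t_n\}$ equals $\frac{m+2}{2}\in\N\cup(\N+\frac12)$. The bound $\tau\le\rho(g)=\frac{m+2}{2}$ is automatic. When $m$ is odd, $\frac{m+2}{2}$ is a half-integer and hence non-integral, so Hadamard's factorization forces the convergence exponent of the zeros to equal the order, giving $\tau=\frac{m+2}{2}\in\N+\frac12$ with no further work. For the remaining (even) $m$ I would extract the reverse inequality from the asymptotic-integration theory behind property~(b): along each critical ray that actually carries zeros the consecutive zeros of $g$ are spaced like $|z|^{-m/2}$, so $n(r,1/g)\gtrsim r^{(m+2)/2}$ and therefore $\tau\ge\frac{m+2}{2}$; for $m=0$ this is simply the fact that an arithmetic progression has convergence exponent $1$. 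Combining the bounds gives $\tau=\frac{m+2}{2}$, contradicting the first alternative and completing the contrapositive.

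The soft part of the argument (the reduction via Proposition~\ref{diff}, the identification of the zeros of $g$ with $\{t_n\}$, and the ray statement of property~(b)) is routine. The main obstacle is the convergence-exponent computation in the \emph{even}-degree case, where the order $\frac{m+2}{2}$ is an integer and Hadamard alone does not pin down $\tau$: there one genuinely needs the regular spacing of the zeros along the critical rays coming from the asymptotic integration of \eqref{nz} (cf.\ \cite[Chapters~4, 5]{laine} and \cite{hei}) to secure the lower bound $n(r,1/g)\gtrsim r^{(m+2)/2}$. A secondary technical nuisance is the ad hoc handling of the degenerate case $m=0$, which is not covered by the ray family \eqref{appr1} and must be treated by hand as above.
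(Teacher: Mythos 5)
Your proposal is correct, and it shares the paper's skeleton (argue the contrapositive, feed the zero-free $f$ into Proposition~\ref{diff} to get $g''+Qg=0$ with $\deg Q=m$ and $\rho(g)=\frac{m+2}{2}$), but it diverges at the decisive step. The paper disposes of the convergence-exponent alternative with a single citation: the Bank--Laine theorem \cite[Theorem 1]{bl} says that for odd $m$ the exponent of convergence of the zeros of $g$ is $\frac{m+2}{2}$, and for even $m$ it is either $\frac{m+2}{2}$ or $g$ has finitely many zeros --- the latter being excluded because $\ttt$ is infinite. You instead re-derive exactly this dichotomy by hand: Borel's theorem for non-integral order when $m$ is odd, and the zero-spacing asymptotics $|z|^{-m/2}$ from asymptotic integration (hence $n(r,1/g)\gtrsim r^{(m+2)/2}$ on at least one zero-rich critical ray) when $m$ is even. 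That is legitimate --- it is essentially the proof of the relevant case of Bank--Laine --- though you should say ``carries infinitely many zeros'' rather than ``carries zeros,'' since a critical ray with finitely many zeros gives no density; the existence of a zero-rich ray is guaranteed because $g$ has infinitely many zeros distributed over finitely many rays. On the other hand, you are more complete than the paper on the second alternative: the paper's proof does not address the condition \eqref{appr} at all (it is implicit in property (b) for $m\ge 1$), and your explicit treatment of the degenerate case $m=0$ --- where $g=A e^{i\sqrt{a_0}z}+Be^{-i\sqrt{a_0}z}$ with $A,B\ne 0$, the zeros form an arithmetic progression on a line, and that line is realized as two opposite rays of a degree-$2$ family \eqref{appr1} --- fills a small gap the authors leave to the reader. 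In short: same reduction, a more self-contained but longer route to the exponent computation, and a more careful handling of the ray alternative.
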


It would be interesting to develop a similar theory for more general equations of the form \eqref{fz}, but we are
not aware of any results of this type.

Our last result shows that in the case when all poles of $f$ are contained in some sufficiently small angle
the function $f$ either has at least one zero or is of the form \eqref{zfr}.

\begin{theorem} 
\label{kr1}
Let $f$ be a function of the form \eqref{mer2} with $\sum_n \frac{|c_n|}{|t_n|^2} <\infty$.
Assume that $f$ is of finite order $\rho(f) = \frac{m+2}{2}$, $m\in \N$, and $f$ has no zeros. 
If for some $\alpha\in (0, \frac{\pi}{m+2})$ 
all points from $\ttt$ except a finite number lie in the union of the angles
$\{|\arg z| <\alpha\} \cup \{|\arg z -\pi| <\alpha\} $, then $m=0$
and $ f(z) = \frac{a}{\sin^2(bz-c)} $ for some $a, b, c\in\co$, $a, b\ne 0$.
\end{theorem}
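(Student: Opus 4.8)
The plan is to start from the representation furnished by Proposition~\ref{diff} and then to determine $m$ by counting how many of the critical rays \eqref{appr1} the poles of $f$ can cluster to. Since $f$ has no zeros, Proposition~\ref{diff} gives an entire function $g$ with $f = 1/g^2$ and $g'' + Qg = 0$ for some polynomial $Q$; comparing orders through fact~(a) and the equality of the orders of $1/g^2$ and $g$ yields $\deg Q = m$. I would argue by contradiction, assuming $m \ge 1$, and aim to produce too many rays carrying poles of $f$.

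The analytic heart of the argument is the observation that a function of the form \eqref{mer2} with $\sum_n |c_n|/|t_n|^2 < \infty$ tends to $0$ in every closed sector $\Sigma$ whose directions stay at a fixed positive angular distance $\eta$ from all accumulation directions of $\ttt$. Indeed, for $z \in \Sigma$ and all but finitely many $t_n$ one has $|z - t_n| \ge c_\eta \max(|z|, |t_n|)$, and splitting $\sum_n c_n/(z - t_n)^2$ at $|t_n| = |z|$ reduces the claim to the two facts $|z|^{-2} \sum_{|t_n| \le |z|} |c_n| \to 0$ (a Kronecker-type consequence of $\sum_n |c_n|/|t_n|^2 < \infty$) and $\sum_{|t_n| > |z|} |c_n|/|t_n|^2 \to 0$. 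On the other hand, by fact~(b) and \eqref{appr} all but finitely many $t_n$ cluster to the rays $L_j$ of \eqref{appr1}; and since $\alpha < \frac{\pi}{m+2}$, each of the two angles forming the double cone in the hypothesis is narrower than the gap $\frac{2\pi}{m+2}$ between consecutive rays $L_j$, so the cone meets at most two of the $m+2$ equally spaced rays. Hence $\ttt$ can cluster to at most two of the $L_j$.

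Next I would invoke the classical asymptotic theory of \eqref{nz} (see \cite[Chapters 4, 5]{laine} and \cite{hei}): the critical rays \eqref{appr1} cut the plane into $m+2$ sectors, and in the interior of each sector a given solution $g$ is either exponentially large (dominant) or exponentially small (subdominant), the latter only for a one-dimensional family of solutions. If $g$ were subdominant in some sector $S$, then $1/|g|^2 \to \infty$ exponentially in the interior of $S$; but that interior keeps a positive angular distance from every critical ray, hence from every accumulation direction of $\ttt$, so the decay statement of the preceding paragraph forces $f = 1/g^2 \to 0$ there, a contradiction. Therefore $g$ is dominant in all $m+2$ sectors, and in this situation the theory shows that the zeros of $g$ accumulate to every one of the $m+2$ critical rays (a ray fails to attract zeros only when $g$ is subdominant in one of its two neighbouring sectors). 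Thus $\ttt$ clusters to all $m+2$ rays, contradicting the bound of at most two as soon as $m + 2 \ge 3$, that is, as soon as $m \ge 1$. Hence $m = 0$.

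It then remains to treat $m = 0$, which is routine: $Q$ is a nonzero constant, say $Q \equiv b^2$ with $b \ne 0$, and the general solution of \eqref{nz} is $g(z) = A \sin(bz - c)$ for constants $A \ne 0$ and $c$, so that $f = 1/g^2 = \frac{a}{\sin^2(bz - c)}$ with $a = A^{-2}$, which is exactly \eqref{zfr}. The step I expect to be most delicate is the dichotomy of the third paragraph: both the exponential lower bound for $|g|$ in a subdominant sector and the assertion that dominance in every sector forces zeros on every critical ray rest on the fine WKB/Stokes-line asymptotics of \eqref{nz}, which I would quote from \cite{laine, hei} rather than reprove.
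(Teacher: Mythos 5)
Your argument is correct, but after the opening step that it shares with the paper (Proposition~\ref{diff} gives $f=1/g^2$ with $g''+Qg=0$ and $\deg Q=m$, and the double cone of half-opening $\alpha<\frac{\pi}{m+2}$ can capture at most two of the $m+2$ equally spaced critical rays) it takes a genuinely different route. You kill the case $m\ge 1$ inside the asymptotic theory of \eqref{nz}: your decay estimate for $f$ in closed sectors separated from the accumulation directions of $\ttt$ is sound (it is the same splitting at $|t_n|=|z|^{1/2}$ and $|t_n|=|z|$ that the paper uses for $\mathcal{J}_1$ in Proposition~\ref{inte} and in Lemma~\ref{ord}), it forbids $g$ from being subdominant in any critical sector, and dominance in all $m+2$ sectors forces zeros on all $m+2$ rays, contradicting the bound of two. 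The paper instead extracts from the ray count only that the zeros approach a single line (using, for odd $m$, the parity of omitted sectors from \cite[Theorem 5.4]{laine}), hence lie in a strip, and then runs the Krein-class machinery: $1/g$ is expanded in a series of the form \eqref{krein} with an entire remainder that is shown to be a polynomial by growth estimates off the strip and Phragm\'en--Lindel\"of, after which Sherstyukov's theorem \cite{shers} forces $g$ to be of exponential type, so $\deg Q=0$. Your route is shorter and bypasses the Krein/Sherstyukov input entirely, but it leans on the finer fact from \cite{laine,hei} that a critical ray fails to attract zeros only when the solution is subdominant in an adjacent sector, whereas the paper needs only the coarser localization \eqref{appr} and stays within the Cauchy-transform circle of ideas announced in the introduction. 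Two small points to polish: in the subdominant case you should argue on a closed subsector (say the bisecting ray) of the critical sector, since the open interior does not keep a uniform angular distance from its bounding rays; and for $m=0$ you should note that a solution $A_1e^{ibz}+A_2e^{-ibz}$ with $A_1A_2=0$ is excluded because $g$ must have the infinitely many zeros $t_n$.
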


In particular, if $f$ is of finite order and for any $\vep>0$ 
all except a finite number of zeros lie in the union of angles $\{|\arg z| <\vep\} 
\cup \{|\arg z-\pi| <\vep\}$, then $ f(z) = \frac{a}{\sin^2(bz-c)} $ for some $a,b\ne 0$, $b\in\R$ and $c\in \co$.

There are numerous results concerning zero distribution for the 
solutions of differential equation \eqref{nz}. In particular, S.~Hellerstein, L.-C.~Shen and J.~Williamson
\cite{hel} showed that if there exist two linearly independent solutions of  \eqref{nz} which have only real zeros, 
then $Q$ is a constant. This result was generalized by F.~Br\"uggemann \cite{bru}
and N.~Steinmetz \cite{ste} who showed, in particular, that if the zeros of all solutions from a fundamental system 
of a linear equation $g^{(n)} + \sum_{j=0}^{n-1} Q_j g^{(j)} =0$ with polynomial coefficients are in some sense 
close to the real axis, then all the coefficients $Q_j$ are constants. However, it should be emphasized 
that it is essential that the condition holds for {\it all} solutions from a fundamental system. The simplest example of the 
solutions of $g'' -zg =0$ (Airy functions ${\rm Ai}$ and ${\rm Bi}$) shows that for nonconstant coefficients 
it may happen that one solution has only real zeros (see Section \ref{exa} for details). Moreover, 
this example shows that the bound on $\alpha$ in Theorem \ref{kr1} is sharp, since 
the function $1/{\rm Bi}^2$ admits a representation \eqref{mer2} and the zeros of ${\rm Bi}$ accumulate
to the rays $\{\arg z =\pi/3, -\pi/3, \pi\}$.

In Theorem \ref{kr1} we assume that only one solution of \eqref{nz} 
has zeros in some angle (whose size depends on the order), but we have an additional property  
that $\frac{1}{g^2}$ has an expansion
into a series of simple fractions. This allows us to use some results about the so-called Krein class of entire functions. 
\smallskip

The paper is organized as follows. In Section \ref{prel} we remind the basic definitions of the Nevanlinna theory
and prove some auxiliary estimates. Theorem \ref{main1} is proved in Section \ref{sect3}. In Section \ref{difeq} 
we consider the relations between zero-free functions of the form \eqref{mer2} and differential equations,
and prove Proposition \ref{diff} and Theorem \ref{kr1}. Some basic examples are discussed in Section \ref{exa}.
\medskip
\\
\textbf{Acknowledgement.} The authors are grateful to professors J.K.~Langley,  
J.~Heittokangas and K.G.~Malyutin for helpful comments and references
and to the referees for numerous helpful suggestions which improved the presentation.
\bigskip


\section{Preliminaries}
\label{prel}

In what follows we write $U(x)\lesssim V(x)$ if 
there is a constant $C$ such that $U(x)\leq CV(x)$ holds for all $x$ 
in the set in question. We write $U(x)\asymp V(x)$ if both $U(x)\lesssim V(x)$ and
$V(x)\lesssim U(x)$. The standard Landau notations
$O$ and $o$ also will be used. We use the function $\log^+$ defined as
$\log^+x = \max (\log x, 0)$, $x\ge 0$.

We recall some basic notations of the Nevanlinna theory (see, e.g., \cite{gold}). Given a meromorphic function $f$,
we denote by $n(r,f)$ the counting function of its poles and put
$$
N(r, f) = \int_0^r \frac{n(t, f) - n(0,f)}{t}dt + n(0,f) \log r.
$$
The characteristics  $N(r, f)$ and  $N(r, 1/f)$ account, respectively, for the number of poles and zeros of $f$. 
We also put 
$$
m(r,f) = \frac{1}{2\pi} \int_0^{2\pi} \log^+|f(re^{it})|\,dt
$$
and $T(r,f) =N(r, f) +m(r, f)$. Recall that the Nevanlinna characteristic $T(r,f)$ is an increasing function of $r$, 
$T(r, f) \asymp T(r, 1/f)$, and $\log r = o(T(r,f))$, $r\to \infty$, unless $f$ is a rational function.
The order of a meromorphic function is defined as
$$
\rho(f) = \limsup_{r\to \infty} \frac{\log T(r,f)}{\log r}.
$$
Finally, the defect of the value 0 is defined as
$$
\delta(0,f) = \liminf_{r\to \infty} \frac{m(r, 1/f)}{T(r,f)} = 1-  \limsup_{r\to \infty} \frac{N(r, 1/f)}{T(r,f)}.
$$

An important property of functions $f$ of the form \eqref{mer} satisfying \eqref{conv} 
used in the proof of the Keldysh theorem is the following estimate \cite[Chapter V, Theorem 6.1]{gold}:
for any $p\in(0,1)$,
\begin{equation}
\label{akir}
\int_0^{2\pi} |f(re^{i\phi})|^p d\phi= o(1), \qquad r\to \infty.
\end{equation}
We need an analog of this statement for the functions $f(z) = \sum_{n=1}^\infty \frac{c_n}{(z-t_n)^2}$
such that $\sum_n \frac{|c_n|}{|t_n|^2} <\infty$. One can expect that in this case \eqref{akir} holds for any $p\in (0,1/2)$, 
but we were not able to find a proof. However, one can show that \eqref{akir} holds even with $p=1$ for a sufficiently dense set 
of radii.

Let $\ttt=\{t_n\}$ have a finite convergence exponent and let $L>0$ be such that
$\sum_n |t_n|^{-L} <\infty$. Put 
\begin{equation}
\label{univ}
F = \bigcup_n \big[|t_n| - |t_n|^{-L}, |t_n| + |t_n|^{-L}\big].
\end{equation}
Finally, for $R>0$ put
$$
\mathcal{A}_R=\big\{ z\in \co:\ R<|z|<2R, \ |z| \notin F\big\}.
$$

\begin{proposition}
\label{inte}
Let $f(z) = \sum_{n=1}^\infty \frac{c_n}{(z-t_n)^2}$ where $\ttt=\{t_n\}$ has a finite convergence exponent 
and $\sum_{n\in \mathbb{N}}\frac{|c_n|}{|t_n|^2}<+\infty$. Then
there exists	$\{r_j\}_{j\in \mathbb{N}}$ satisfying $r_{j+1}\ge 4r_j$ and $\ln r_j \asymp j$  such that  
\begin{equation}
	\int \limits_{0}^{2\pi} |f(r_j e^{i\phi})|\, d\varphi =o(1), \qquad j\rightarrow \infty.
\label{noct}
\end{equation}
\end{proposition}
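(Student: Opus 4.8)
The plan is to reduce the claim to an elementary estimate on each circle and then run an averaging argument over dyadic annuli, selecting the radii by a density argument. First I would use the triangle inequality together with the exact one-circle integral
\[
\int_0^{2\pi}\frac{d\phi}{|re^{i\phi}-t_n|^2}=\frac{2\pi}{|r^2-|t_n|^2|},
\]
which follows from $|re^{i\phi}-t_n|^2=r^2+|t_n|^2-2r|t_n|\cos(\phi-\arg t_n)$ and the classical formula $\int_0^{2\pi}(A-B\cos\phi)^{-1}d\phi=2\pi/\sqrt{A^2-B^2}$ with $A^2-B^2=(r^2-|t_n|^2)^2$. Thus it suffices to make $\Sigma(r):=\sum_n |c_n|\,|r^2-|t_n|^2|^{-1}$ small, and the whole problem becomes a question about this nonnegative series.

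Next I would fix $L$ with $\sum_n|t_n|^{-L}<\infty$, so that $F$ has finite total length and $|(R,2R)\setminus F|\ge R/2$ for large $R$, and estimate the annulus average $\frac1R\int_{(R,2R)\setminus F}\Sigma(r)\,dr$ by splitting $\ttt$ into \emph{far} indices ($|t_n|\le R/2$ or $|t_n|\ge 4R$) and \emph{near} indices ($R/2<|t_n|<4R$). For far indices one bounds $|r^2-|t_n|^2|$ below by $\asymp\max(R,|t_n|)^2$; the large ones contribute $\lesssim\sum_{|t_n|\ge 4R}|c_n|\,|t_n|^{-2}=o(1)$, and the small ones contribute $\lesssim R^{-2}\sum_{|t_n|\le R/2}|c_n|$, which is $o(1)$ once one checks $\sum_{|t_n|\le R/2}|c_n|=o(R^2)$ by splitting at $|t_n|\le\eps R$ and using $\sum_n|c_n|\,|t_n|^{-2}<\infty$. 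These far contributions are $o(1)$ \emph{uniformly} in $r\in(R,2R)$. For the near indices I would use $|r^2-|t_n|^2|\ge R\,|r-|t_n||$ together with $|r-|t_n||\ge|t_n|^{-L}\gtrsim R^{-L}$ off $F$, giving $\int_{(R,2R)\setminus F}|r^2-|t_n|^2|^{-1}dr\lesssim R^{-1}\log R$, and summing produces a near contribution $\lesssim (\log R)\sum_{R/2<|t_n|<4R}|c_n|\,|t_n|^{-2}$.

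Writing $R=2^k$ and $\sigma_k:=\sum_{2^{k-1}<|t_n|<2^{k+2}}|c_n|\,|t_n|^{-2}$, this shows the average $M_k$ of $\Sigma$ over $\mathcal{A}_{2^k}$ satisfies $M_k\le\eta_k+Ck\sigma_k$, where $\eta_k\to0$ and $\sum_k\sigma_k<\infty$ (each $n$ is counted a bounded number of times). The \textbf{main obstacle} is precisely the factor $k$ (i.e. the $\log R$) in the near-diagonal estimate: this logarithmic loss seems intrinsic to superposing the kernels $1/|r-|t_n||$, and it prevents concluding $M_k\to0$ at every scale. I would overcome it by a selection argument exploiting only $\sum_k\sigma_k<\infty$: for each fixed $\eps>0$ the set $\{k:k\sigma_k>\eps\}$ has density zero, since on every block $[N,2N]$ its cardinality is $\lesssim\eps^{-1}N\sum_{k\ge N}\sigma_k=o(N)$; diagonalizing over a sequence $\eps\to0$ yields a density-one set $G$ of scales along which $k\sigma_k\to0$, hence $M_k\to0$. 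Taking $k_j$ to be every third element of $G$ gives $k_j\asymp j$ and $k_{j+1}-k_j\ge3$, and choosing $r_j\in(2^{k_j},2^{k_j+1})\setminus F$ with $\int_0^{2\pi}|f(r_je^{i\phi})|\,d\phi\le 2M_{k_j}\to0$ produces radii with $r_{j+1}\ge4r_j$ and $\ln r_j\asymp j$, as required. The delicate point is this density-zero property of the ``bad'' scales, which is what reconciles the unavoidable $\log R$ with the demand that good radii occur at a linear rate in $\ln r_j$.
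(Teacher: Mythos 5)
Your proof is correct and follows essentially the same route as the paper's: the same three-way splitting of the poles over dyadic annuli, the same logarithmic loss in the near-diagonal range $R/2<|t_n|<4R$ after excising the exceptional set $F$, and the same resolution by selecting a linearly dense set of good scales from the summable sequence $\sigma_k$ (the paper packages this selection as Lemma \ref{elem}, proved by a pigeonhole on dyadic blocks, while you phrase it as a density-zero statement for the bad scales; the two are interchangeable). The only other difference is cosmetic: you integrate first over the circle via the exact identity $\int_0^{2\pi}|re^{i\phi}-t_n|^{-2}\,d\phi=2\pi/\bigl|r^2-|t_n|^2\bigr|$ and then average in $r$, whereas the paper averages the area integral over the annulus $\mathcal{A}_R$ directly; by Fubini these yield the same bounds.
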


In the proof we will need the following elementary lemma.

\begin{lemma}
\label{elem}
		Let $a_k\ge 0$ and $\sum\limits_{k\ge 1} a_k <\infty$. Then there exists a subsequence $\{a_{k_n}\}$
		of $\{a_k\}$ with $a_{k_n}=o\Big(\frac{1}{k_n}\Big)$ and $n\le k_n \le 2n$.
\end{lemma}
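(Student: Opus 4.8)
The plan is to exploit that the block sums of a convergent nonnegative series decay, and to select in each dyadic block the index where $a_k$ is smallest. Set $S_n = \sum_{k=n}^{2n} a_k$. Since $\sum_k a_k < \infty$ and all terms are nonnegative, the tail $\sum_{k \ge n} a_k \to 0$ as $n \to \infty$; and because $S_n \le \sum_{k \ge n} a_k$, we get $S_n \to 0$.

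For each $n$ I would let $k_n$ be an index in $\{n, n+1, \dots, 2n\}$ at which $a_k$ attains its minimum over that block. The block contains $n+1$ indices, so the minimum is no larger than the average, giving $a_{k_n} \le S_n/(n+1)$. Since $n \le k_n \le 2n$ by construction, this yields $k_n a_{k_n} \le 2n \cdot S_n/(n+1) \le 2 S_n$, and $S_n \to 0$ forces $k_n a_{k_n} \to 0$, i.e.\ $a_{k_n} = o(1/k_n)$ --- exactly the asserted estimate together with the two-sided bound on $k_n$.

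The only point requiring a word of care is whether $\{a_{k_n}\}$ is meant as a genuine (strictly increasing) subsequence rather than one index chosen per scale, since the minimizers $k_n$ need not increase with $n$. If strict monotonicity is wanted, I would thin out the scales: put $n_1 = 1$ and $n_{j+1} = 2 n_j + 1$, so that the blocks $[n_j, 2 n_j]$ are pairwise disjoint; the corresponding minimizers $k_{n_j}$ are then strictly increasing, and the same averaging bound still gives $k_{n_j} a_{k_{n_j}} \le 2 S_{n_j} \to 0$. I do not expect any real obstacle here: the whole content is the averaging inequality $a_{k_n} \le S_n/(n+1)$, which transfers the decay of the block sums directly into the conclusion. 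The fact that we obtain a genuine little-$o$ statement, and not merely the existence of one small term per block, is automatic, precisely because the averaging bound carries the rate $S_n$ with it.
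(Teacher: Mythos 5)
Your averaging estimate is correct and is the same mechanism the paper uses: the minimum of $a_k$ over the block $[n,2n]$ is at most the average, so $k_n a_{k_n}\le 2S_n\to 0$. The gap is in turning this into a \emph{subsequence} that still satisfies $n\le k_n\le 2n$, and you have not closed it. Your first selection (one minimizer per overlapping block) need not be increasing --- the minimizers can coincide or move backwards --- so $\{a_{k_n}\}$ is not a subsequence of $\{a_k\}$. Your proposed repair via the disjoint blocks $n_{j+1}=2n_j+1$ restores monotonicity but makes the $j$-th selected index of order $2^j$, so the $j$-th term of the resulting subsequence violates $j\le k_j\le 2j$ entirely. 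That two-sided bound is not decorative: in Proposition \ref{inte} it is exactly what gives $\ln r_j\asymp j$, which the proof of Theorem \ref{main1} then exploits to force $\rho(f)=\infty$; it cannot be traded away for monotonicity. (Nor can you fix things by shrinking to disjoint blocks of bounded length: a summable sequence need not be $o(1/k)$ along any prescribed bounded-gap set of indices.)

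The missing idea is that one good index per block is not enough; you need a positive \emph{proportion} of good indices in each block, after which a strictly increasing selection with $k_n\asymp n$ exists. This is precisely what the paper's proof supplies: by a Chebyshev-type count, for all large $m$ the inequality $a_k\le k^{-1}\bigl(\sum_{j=2^m+1}^{2^{m+1}}a_j\bigr)^{1/2}$ can fail for at most $2^{m-1}$ indices $k\in(2^m,2^{m+1}]$ (otherwise the block sum would be bounded below by $1/16$, contradicting its decay), so at least half of every dyadic block is good with a uniform $o(1/k)$ bound. Your setup admits the same repair: in $[n,2n]$ the number of $k$ with $a_k>\sqrt{S_n}/k$ is at most $2n\sqrt{S_n}=o(n)$, so all but a vanishing fraction of the block is good and a greedy increasing choice with $k_n\in[n,2n]$ (up to an irrelevant additive constant) goes through. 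As written, however, the proposal does not prove the lemma.
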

	
The proof of the lemma is immediate, since for any sufficiently large $m$ the inequality 
$$
	a_k \le \frac{1}{k} \Big( \sum_{j=2^m+1}^{2^{m+1}} a_j\Big)^{1/2}
$$
holds for at least $2^{m-1}$ values of $k\in [2^m+1, 2^{m+1}]$. It remains to incorporate these $2^{m-1}$ values
of $a_k$ into $\{a_{k_n}\}$.
	
\begin{proof}[Proof of Proposition \ref{inte}]
Let $L$, $F$ and $\mathcal{A}_R$ be as above. Consider the integral
$$
\begin{aligned}
\mathcal{J}(R) & = \int\limits_{\mathcal{A}_R}\bigg|\sum\limits_{n\ge 1}\frac{c_n}{(z-t_n)^2} \bigg|\,dxdy 
 = 
\int\limits_{\mathcal{A}_R}\bigg|\sum\limits_{|t_n| \le R/2}\frac{c_n}{(z-t_n)^2} \bigg|\,dxdy\\
&  +
\int\limits_{\mathcal{A}_R}\bigg|\sum\limits_{R/2< |t_n| <4R}\frac{c_n}{(z-t_n)^2} \bigg|\,dxdy  +
\int\limits_{\mathcal{A}_R}\bigg|\sum\limits_{|t_n| \ge 4R}\frac{c_n}{(z-t_n)^2} \bigg|\,dxdy \\
&  =  \mathcal{J}_1(R) + \mathcal{J}_2(R) +\mathcal{J}_3(R).
\end{aligned}
$$
We show that for a sufficiently dense set of $R$-s one has $\mathcal{J}(R)  = o(R^2)$, $R\to \infty$.

Since $|z-t_n|\asymp R$ when $z\in \mathcal{A}_R$, $|t_n| \le R/4$, we have
$$
\mathcal{J}_1(R) \lesssim  \sum\limits_{|t_n| \le R/2} |c_n| \lesssim 
R \sum\limits_{|t_n|\le R^{1/2} } \frac{|c_n|}{|t_n|^2} + R^2 \sum\limits_{R^{1/2} \le |t_n| \le R/2} \frac{|c_n|}{|t_n|^2} = o(R^2).
$$
Here we used the condition $\sum_{n\in \mathbb{N}}\frac{|c_n|}{|t_n|^2}<+\infty$. Analogously, 
$$
\mathcal{J}_3(R) \lesssim  R^2 \sum\limits_{|t_n| \ge 4R} \frac{|c_n|}{|t_n|^2} = o(R^2).
$$
Note that if $R/2 < |t_n| <4R$ and $z\in \mathcal{A}_R$, then we have
$|t_n|^{-L} < |z-t_n| <6R$. The first inequality follows from the fact that $z\notin F$.
Since $\int_{\vep <|z| <r} |z|^{-2} dxdy \asymp \log \frac{r}{\vep}$ we conclude that
$$
\mathcal{J}_2(R) \lesssim \log R \sum\limits_{R/2 < |t_n| <4R}  |c_n| 
\lesssim R^2 \log R \sum\limits_{R/2 < |t_n| <4R}  \frac{|c_n|}{|t_n|^2}.
$$
		
Now put 	
$$
	a_k=\sum_{8^{k}<|t_n|<8^{k+1}}\frac{|c_n|}{|t_n|^2}.
$$
Then $\sum\limits_{k\ge 1} a_k<\infty$ and by Lemma \ref{elem} there exists a subsequence $a_{k_j}$ 
such that $a_{k_j}=o\left(\frac{1}{k_j}\right)$ and $j\le k_j \le 2j$. Let us show that for the sequence
$R_j=2^{3k_j+1}=2\cdot 8^{k_j}$ we have $\mathcal{J}(R_j) = o(R^2_j)$.  
Indeed, $k_j\asymp \ln R_j$ and
$$
\sum\limits_{R_j/2 <|t_n|<4R_j}\frac{|c_n|}{|t_n|^2}=	
\sum\limits_{8^{k_j}<|t_n|<8\cdot8^{k_j}}\frac{|c_n|}{|t_n|^2}=a_{k_j}=
o\left(\frac{1}{k_j}\right)=o\left(\frac{1}{\ln R_j}\right).
$$
Therefore, 
$$
\mathcal{J}_2(R_j) \lesssim R^2_j\ln R_j \sum\limits_{R_j/2 <|t_n|<4 R_j}\frac{|c_n|}{|t_n|^2}=o(R^2_j).
$$
Thus, $\mathcal{J}(R_j) = o(R^2_j)$ and $\ln R_j \asymp k_j \asymp j$.		

Since 
$$
\mathcal{J}(R_j)  = \int\limits_{[R_j, 2R_j]\setminus F} 
\bigg(\int \limits_{0}^{2\pi} |f(r e^{i\phi})|\, d\varphi \bigg) r\,dr  = o(R^2_j),
$$
we conclude that one can choose a sequence $r_j \in [R_j, 2R_j]$ such that \eqref{noct} holds. 
Note that $r_j \in [2^{3k_j+1}, 2^{3k_j+2}] \subset  [2^{3j+1}, 2^{6j+2}]$ since $j\le k_j\le 2j$.
Thus, $\ln r_j \asymp j$.
\end{proof}

To simplify the presentation, in the proof of  Proposition \ref{inte} we considered the dyadic annuli. Clearly, 
we can make the sequence $r_j$ to be arbitrarily dense in the logarithmic scale. 
Repeating the proof of  Proposition \ref{inte} with $2$ replaced by $1+\delta$ with $\delta>0$, we get

\begin{proposition}
\label{inte1}
Let $\ttt=\{t_n\}$ have a finite convergence exponent, $\sum \limits_{n\in \mathbb{N}}\frac{|c_n|}{|t_n|^2}<+\infty$
and let $\delta>0$. Then there exists	$\{r_j\}_{j\in \mathbb{N}}$ satisfying $r_{j+1}\ge (1+\delta) r_j$ and 
$$
(1+\delta)^{3j+1} \le r_j \le (1+\delta)^{3(1+\delta)j+2}
$$
such that  for the function $f(z) = \sum_{n=1}^\infty \frac{c_n}{(z-t_n)^2}$ we have
\begin{equation}
	\int \limits_{0}^{2\pi} |f(r_j e^{i\phi})|\, d\varphi =o(1), \qquad j\rightarrow \infty.
\label{noct1}
\end{equation}
\end{proposition}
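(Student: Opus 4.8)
The plan is to rerun the proof of Proposition \ref{inte} with the dilation factor $2$ replaced by $1+\delta$ at every step, so I will only record the two places where the value $2$ is actually used. First I would replace the dyadic annuli by thinner ones: put $\mathcal{A}_R^\delta = \{z\in\co : R<|z|<(1+\delta)R,\ |z|\notin F\}$ with $F$ as in \eqref{univ}, and study $\mathcal{J}(R)=\int_{\mathcal{A}_R^\delta}|f(z)|\,dxdy$, split exactly as before into $\mathcal{J}_1(R),\mathcal{J}_2(R),\mathcal{J}_3(R)$, the contributions of the poles with $|t_n|\le R/(1+\delta)$, with $R/(1+\delta)<|t_n|<(1+\delta)^2 R$, and with $|t_n|\ge(1+\delta)^2 R$ respectively. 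The goal is again to show $\mathcal{J}(R)=o(R^2)$ along a suitable sequence of radii.

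The three estimates survive verbatim up to constants depending on $\delta$. For a pole with $|t_n|\le R/(1+\delta)$ one has $|z-t_n|\ge R\delta/(1+\delta)\asymp_\delta R$ on $\mathcal{A}_R^\delta$, so $\int_{\mathcal{A}_R^\delta}|z-t_n|^{-2}\,dxdy\lesssim_\delta 1$, and the same splitting of $\sum_{|t_n|\le R/(1+\delta)}|c_n|$ at $|t_n|=R^{1/2}$ used in Proposition \ref{inte} gives $\mathcal{J}_1(R)=o(R^2)$; symmetrically $\mathcal{J}_3(R)\lesssim_\delta R^2\sum_{|t_n|\ge(1+\delta)^2R}|c_n|/|t_n|^2=o(R^2)$. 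For the middle range $z\notin F$ still forces $|z-t_n|>|t_n|^{-L}$ while $|z-t_n|\lesssim_\delta R$, so $\int_{\mathcal{A}_R^\delta}|z-t_n|^{-2}\,dxdy\lesssim\log(R|t_n|^L)\lesssim_\delta\log R$ and hence $\mathcal{J}_2(R)\lesssim_\delta R^2\log R\sum_{R/(1+\delta)<|t_n|<(1+\delta)^2R}|c_n|/|t_n|^2$, the exact analogue of the bound in Proposition \ref{inte}. Since $\delta$ is fixed, these $\delta$-dependent constants do not affect any of the $o(\cdot)$ statements.

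The only genuine change is in Lemma \ref{elem}, which I would reprove with the dyadic blocks $[2^m+1,2^{m+1}]$ replaced by the $(1+\delta)$-adic blocks $((1+\delta)^m,(1+\delta)^{m+1}]$. Writing $S_m$ for the sum of $a_k$ over such a block, the number of indices $k$ in it failing $a_k\le S_m^{1/2}/k$ is at most $S_m^{1/2}(1+\delta)^{m+1}$, which is $o$ of the block length $\asymp\delta(1+\delta)^m$; thus the density of the good indices tends to $1$, and for every large $j$ the window $[j,(1+\delta)j]$ contains a good index, which may be chosen increasing in $j$. Applying this to $a_k=\sum_{(1+\delta)^{3k}<|t_n|<(1+\delta)^{3(k+1)}}|c_n|/|t_n|^2$ yields a subsequence with $a_{k_j}=o(1/k_j)$ and $j\le k_j\le(1+\delta)j$. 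With $R_j=(1+\delta)^{3k_j+1}$ the middle sum is precisely $a_{k_j}$, so $\mathcal{J}_2(R_j)\lesssim_\delta R_j^2\log R_j\cdot a_{k_j}=o(R_j^2)$ because $k_j\asymp_\delta\log R_j$; hence $\mathcal{J}(R_j)=o(R_j^2)$, and writing $\mathcal{J}(R_j)=\int_{[R_j,(1+\delta)R_j]\setminus F}\big(\int_0^{2\pi}|f(re^{i\phi})|\,d\phi\big)r\,dr$ one extracts $r_j\in[R_j,(1+\delta)R_j]$ with \eqref{noct1}. The spacing $R_{j+1}/R_j\ge(1+\delta)^3$ then forces $r_{j+1}\ge(1+\delta)r_j$, and $j\le k_j\le(1+\delta)j$ translates into $(1+\delta)^{3j+1}\le r_j\le(1+\delta)^{3(1+\delta)j+2}$.

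The step I expect to require the most care is this refinement of Lemma \ref{elem}: one must verify that the good indices are dense enough in the logarithmic scale that a representative can be placed in every window $[j,(1+\delta)j]$ with $j$ large, which is exactly why the bad indices in a $(1+\delta)$-adic block must be shown to occupy a \emph{vanishing} proportion rather than merely a proportion bounded below $1$. The finitely many small values of $j$ where the window may miss the good set are dropped and the sequence re-indexed, which is harmless since \eqref{noct1} is an asymptotic statement. Everything else is a routine rescaling of the case $\delta=1$.
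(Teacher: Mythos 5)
Your proposal is correct and is exactly the argument the paper intends: the paper's own ``proof'' of Proposition \ref{inte1} is the single sentence ``repeating the proof of Proposition \ref{inte} with $2$ replaced by $1+\delta$,'' and you have carried out that rescaling faithfully, correctly identifying that the only step requiring genuine modification is the $(1+\delta)$-adic version of Lemma \ref{elem}, where the bad indices must be shown to have vanishing density in each block. The one point worth tightening is the phrase ``which may be chosen increasing in $j$'': the cleanest justification is to enumerate the good indices in increasing order as $\kappa_1<\kappa_2<\cdots$ and note that the density count gives $j\le\kappa_j\le(1+\delta)j$ for all large $j$, which yields the strict monotonicity needed for $r_{j+1}\ge(1+\delta)r_j$.
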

	
Finally, let us show that the order of a function $f(z) = \sum_{n=1}^\infty \frac{c_n}{(z-t_n)^2}$ is completely 
determined by its poles.
		
\begin{lemma}
\label{ord}
Let $\ttt=\{t_n\}$ have a finite convergence exponent, $\sum \limits_{n\in \mathbb{N}}\frac{|c_n|}{|t_n|^2}<+\infty$ and $f(z) = \sum_{n=1}^\infty \frac{c_n}{(z-t_n)^2}$. Then
$$
\limsup_{r\to \infty} \frac{\log N(r,f)}{\log r} = \limsup_{r\to \infty} \frac{\log T(r,f)}{\log r}.
$$
\end{lemma}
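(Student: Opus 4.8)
The plan is to prove the two inequalities separately. Since $m(r,f)\ge 0$ we have $N(r,f)\le T(r,f)$ for all $r$, and taking logarithms and dividing by $\log r$ immediately gives $\limsup_{r\to\infty}\frac{\log N(r,f)}{\log r}\le \limsup_{r\to\infty}\frac{\log T(r,f)}{\log r}$. Thus the entire content of the lemma is the reverse inequality. Since $\ttt$ has finite convergence exponent, the quantity $\mu:=\limsup_{r\to\infty}\frac{\log N(r,f)}{\log r}$ is finite (it is the order of the counting function of the double poles $t_n$). I may also assume $\ttt$ is infinite, so that $N(r,f)\to\infty$ and $\log(N(r,f)+O(1))\sim \log N(r,f)$; otherwise $f$ is rational and the statement is trivial.

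The engine is the integrability on special circles provided by Proposition \ref{inte1}. Fix $\delta>0$ and let $\{r_j\}$ be the radii it produces. Since $\log^+ x\le x$ for $x\ge 0$, the proximity function satisfies $m(r_j,f)=\frac{1}{2\pi}\int_0^{2\pi}\log^+|f(r_je^{i\phi})|\,d\phi\le \frac{1}{2\pi}\int_0^{2\pi}|f(r_je^{i\phi})|\,d\phi=o(1)$ by \eqref{noct1}. Hence $T(r_j,f)=N(r_j,f)+m(r_j,f)=N(r_j,f)+o(1)$, so along this sequence $T$ and $N$ agree up to a vanishing term.

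Next I would propagate this to all radii using that $T(r,f)$ is increasing. For arbitrary large $r$ choose $j$ with $r_j\le r<r_{j+1}$; then $T(r,f)\le T(r_{j+1},f)=N(r_{j+1},f)+o(1)$, while $\log r\ge \log r_j$. Therefore $\frac{\log T(r,f)}{\log r}\le \frac{\log(N(r_{j+1},f)+o(1))}{\log r_{j+1}}\cdot \frac{\log r_{j+1}}{\log r_j}$. As $j\to\infty$ the first factor has limit superior at most $\mu$, being evaluated along a subsequence of radii, and the bounds $(1+\delta)^{3j+1}\le r_j\le (1+\delta)^{3(1+\delta)j+2}$ give $\frac{\log r_{j+1}}{\log r_j}\le \frac{3(1+\delta)(j+1)+2}{3j+1}\to 1+\delta$. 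Consequently $\limsup_{r\to\infty}\frac{\log T(r,f)}{\log r}\le (1+\delta)\mu$. Since the left-hand side does not depend on $\delta$ and $\delta>0$ is arbitrary, letting $\delta\to 0$ yields $\limsup_{r\to\infty}\frac{\log T(r,f)}{\log r}\le \mu$, the desired reverse inequality.

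The one genuinely delicate point is this last interpolation: bounding $T(r)$ on $[r_j,r_{j+1}]$ by $T(r_{j+1})$ loses a factor $\frac{\log r_{j+1}}{\log r_j}$, and for the estimate to be tight this factor must tend to $1$. This is exactly why I use the refined Proposition \ref{inte1} with small $\delta$ rather than the dyadic Proposition \ref{inte}: the latter only records $\ln r_j\asymp j$ with unspecified constants, giving a ratio of logarithms bounded away from $1$, whereas the former makes the radii logarithmically dense enough that the loss is only $1+\delta$ with $\delta$ arbitrarily small. Everything else — the monotonicity of $T$, the elementary bound $\log^+\le|\cdot|$, and the finiteness of $\mu$ — is routine.
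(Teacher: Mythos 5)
Your argument is correct: the inequality $N\le T$ is trivial, $m(r_j,f)=o(1)$ follows from $\log^+x\le x$ and Proposition \ref{inte1}, and the interpolation via monotonicity of $T$ with the ratio $\frac{\log r_{j+1}}{\log r_j}\to 1+\delta$ followed by $\delta\to 0$ closes the gap; you also correctly flag that this is precisely why the dyadic Proposition \ref{inte} would not suffice. However, this is a genuinely different route from the paper's. The paper does not use the circle averages at all: it takes the exceptional set $F$ of \eqref{univ} (a union of intervals of finite total length) and shows by a direct pointwise estimate that $|f(z)|=o(r^{2L+2})$ whenever $|z|=r\notin F$, splitting the sum over $|t_n|\le r/2$, $r/2<|t_n|<2r$ (where $|z-t_n|\gtrsim r^{-L}$ since $r\notin F$) and $|t_n|\ge 2r$. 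This gives $m(r,f)=O(\log r)=o(T(r,f))$, hence $N(r,f)\sim T(r,f)$, for \emph{every} $r\notin F$; the remaining radii are handled by choosing $\tilde r\notin F$ with $\tilde r-r\lesssim 1$ (possible since $|F|<\infty$) and using monotonicity of $T$, so that $\log\tilde r/\log r\to 1$ automatically and no $\delta$-limiting argument is needed. The paper's approach is lighter and yields the stronger asymptotic $N(r,f)\sim T(r,f)$ off a set of finite measure, whereas yours recycles machinery already proved and gets the sharper bound $m(r_j,f)=o(1)$ but only on a sparse sequence — more than is needed for an order comparison. Both proofs are complete and correct.
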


\begin{proof}
Let $|z| =r \notin F$, where $F$ is defined by \eqref{univ}.
Then for $r/2 <|t_n| <2r$ one has $|z-t_n|\gtrsim r^{-L}$ and
$$
\begin{aligned}
|f(z)| & \lesssim \sum\limits_{|t_n| \le r/2}\frac{|c_n|}{|t_n|^2} + r^L \sum\limits_{r/2<|t_n| <2r} |c_n| +
\sum\limits_{|t_n| \ge 2r} \frac{|c_n|}{|t_n|^2}  \\ 
& \le 4r^{L+2} \sum\limits_{r/2<|t_n| <2r} \frac{|c_n|}{|t_n|^2} +O(1) =o(r^{L+2}).
\end{aligned}
$$
Thus, $m(r,f) = O(\log r) = o(T(r,f))$ and $N(r,f) \sim T(r,f)$ as $r\to \infty$, $r\notin F$. Note that $F$ has a finite measure and $T(r,f)$ 
is increasing. Hence, for any $r\in F$ there exists $\tilde r >r$ such that $\tilde r \notin F$ and $\tilde r-r \lesssim 1$. Then
$$
\limsup_{\stackrel{r\to \infty}{r\in F}} \frac{\log T(r,f)}{\log r} 
\le \limsup_{\stackrel{\tilde r\to \infty}{\tilde r\notin F}} \frac{\log T(\tilde r,f)}{\log \tilde r} = 
\limsup_{\stackrel{\tilde r\to \infty}{\tilde r\notin F}}  \frac{\log N(\tilde r,f)}{\log \tilde r}.
$$
\end{proof}
\bigskip


\section{Proof of Theorem \ref{main1}}
\label{sect3}
	
The proof of Theorem \ref{main1} essentially follows the idea of the proof of the Keldysh theorem in
\cite[Chapter V, Theorem 6.2]{gold}. The key step is to show that $\frac{1}{|f(z)|}\asymp |z|^2$ on a sequence 
of circles outside the sets of small angular measure. 

Without loss of generality, let $\sum \limits_{k\in \mathbb{N}}c_k=1$. Then we have
\[
z^2f(z)-1=
\sum\limits_{k \ge 1} c_k\bigg(\frac{z^2}{(z-t_k)^2} -1\bigg) = 
\sum\limits_{k \ge 1}\frac{2c_kt_k}{z-t_k}+\sum\limits_{k \ge 1}\frac{c_kt_k^2}{(z-t_k)^2}.
\]
Let $r_j$ be the sequence of the radii from Proposition \ref{inte}, that is, $r_{j+1}\ge 4r_j$, $\log r_j \asymp j$, and 
 \[
 \int \limits_{0}^{2\pi}\bigg|\sum\limits_{k\ge 1}\frac{c_kt_k^2}{(r_ je^{i\varphi}-t_k)^2} \bigg| d\varphi =o(1), 
 \qquad j\to \infty.
\]
Also, by \eqref{akir}, 
 \[
 \int \limits_{0}^{2\pi}\bigg|\sum\limits_{k\ge 1}\frac{2c_kt_k}{r_je^{i\varphi}-t_k} \bigg|^{1/2} d\varphi =o(1),
 \qquad j\to \infty.
 \]
 Therefore, there exist sets $E_j\subset  [0, 2\pi]$ such that $|E_j| \to 0$ and $|z^2f(z)-1| <1/2$ when
 $z = r_j e^{i\phi}$ and $\phi \in [0, 2\pi] \setminus E_j$. 
 Here and it what follows we denote by $|E|$ the Lebesgue measure  of a measurable set $E\subset \R$.
 Thus, 
 \begin{equation}
 \label{ej}
 \frac{1}{|f(r_j e^{i\phi})|}\le 2r_j^2, \qquad \phi \in [0, 2\pi] \setminus E_j.
 \end{equation}
 Now we can apply the following estimate due to A.~Edrei and W.H.J.~Fuchs \cite[Chapter I, Theorem 7.3]{gold}:
 for any meromorphic function $g$, any measurable set $E \subset  [0, 2\pi]$ with $|E|<1/2$ and $k>1$ one has
 \begin{equation}
 \label{edre}
 \int_{E} \ln^+|g(re^{i\phi})|\, d\phi \le C(k)|E|\log\frac{1}{|E|} T(kr, g).
 \end{equation}
 We apply this estimate to $g=1/f$, $r=r_j$ and $k=4$ to obtain
 $$
 \begin{aligned}
 m(r_j, 1/f) & = \int_{[0,2\pi] \setminus E_j}\log^+ \frac{1}{|f(r_j e^{i\phi})|} \, d\phi +
 \int_{E_j}\log^+ \frac{1}{|f(r_j e^{i\phi})|} \, d\phi \\ 
 & \lesssim \log r_j + |E_j|\log\frac{1}{|E_j|} T(4r_j, 1/f) =o(T(4r_j, 1/f)).
 \end{aligned}
 $$
 
Assume that $\delta(f,0) = \liminf_{r\rightarrow \infty} \frac{m(r, 1/f)}{T(r,f)} >0$. 
Then there exists $c>0$ such that, for sufficiently large $j$, we have  $m(r_j, 1/f)\ge c T(r_j,f)$.
Also, for any $\vep>0$, we have $m(r_j, 1/f) \le \varepsilon \cdot T(4r_j,f)$ for $j \ge j_0 = j_0(\vep)$.
Combining these inequalities and using the fact that $r_{j+1} \ge 4r_j$  we get 
\[
	T(r_{j+1}, f) \ge T(4r_j,f) \ge \frac{c}{\varepsilon}\, T(r_j,f), \qquad j\ge j_0.
\]
Hence,
\[
	T(r_j,f) \ge \Big(\frac{c}{\varepsilon} \Big)^{j-j_0} T(r_{j_0},f), \qquad j\ge j_0.
\]
Recall that $\log r_j \asymp j$ with some absolute constants. Therefore, 
$$
	\rho(f)=\limsup\limits_{r\rightarrow \infty} \frac{\log T(r,f)}{\log r} \ge 
	\limsup \limits_{j\rightarrow \infty} \frac{\log T(r_j,f)}{\log r_j} \ge
	\limsup \limits_{j\rightarrow \infty} \frac{j-j_0}{\log r_j}\, \log \frac{c}{\vep}\ge c_1 \log \frac{c}{\vep}
$$	
for some numeric constant $c_1>0$. Since $\vep>0$ is arbitrary, we conclude that $\rho(f)= \infty$, a contradiction. 

It remains to show that the orders of $T(r,f)$ and $N(r, 1/f)$ coincide. The orders of  $T(r,f)$ and $N(r, f)$ 
coincide by Lemma \ref{ord}. Assume that 
$$
\tilde \rho =  \limsup_{r\to \infty} \frac{\log N(r,1/f)}{\log r} < \rho(f)
$$
and choose $\mu_0, \mu_1, \mu_2$ so that $\tilde \rho < \mu_2<\mu_1<\mu_0 <\rho(f)$. Next we find $\delta>0$ 
such that $\mu_0>(1+\delta) \mu_1$, $\mu_1>(1+\delta) \mu_2$. Finally, for this $\delta$ we find a sequence
$r_j \in [(1+\delta)^{3j+1}, (1+\delta)^{3(1+\delta)j+2}]$ 
from Proposition \ref{inte1} and obtain \eqref{ej} for some sets $E_j$ with 
$|E_j| \to 0$. Applying the Edrei--Fuchs estimate to $1/f$ 
and $k=(1+\delta)^3$ we conclude that $m(r_j, 1/f) = o(T(kr_j, 1/f))$, $j\to \infty$ and so
$$
T(r_j, f)  = T(r_j, 1/f) +O(1) \le \vep_j T(kr_j, 1/f) + N(r_j, 1/f),
$$
where $\vep_j = \vep_j (\delta) \to 0$, $j \to \infty$. Without loss of generality we may assume in what follows that
$\vep_j <(4k^{\mu_1})^{-1}$ and $N(r_j, 1/f) <r_j^{\mu_2}$  for any $j$.

Let $R$ be a sufficiently large number such that $T(R,f) >2R^{\mu_0}$
and choose $j$  such that $R \in [r_{j-1}, r_{j}]$. Then
$$
R^{\mu_0} \ge r_{j-1}^{\mu_0} \ge (1+\delta)^{(3j-2)\mu_0}, \qquad
r_j^{\mu_1} \le (1+\delta)^{(3(1+\delta)j+2)\mu_1}.
$$
It follows from the inequality $\mu_0>(1+\delta) \mu_1$ that 
$$
T(r_j, f)  \ge T(R, f) > 2R^{\mu_0} > 2r_j^{\mu_1}
$$
if $R$ (hence, $j$) is sufficiently large. 
Since $r_{j+l}^{\mu_2} \le (1+\delta)^{(3(1+\delta)(j+l)+2)\mu_2}$ and 
$k^{l\mu_1}r_j^{\mu_1} \ge (1+\delta)^{(3l+3j+1)\mu_1}$, we can also choose 
$j$ so that
 \begin{equation}
 \label{doha}
k^{l\mu_1}r_j^{\mu_1} > r_{j+l}^{\mu_2}, \qquad l \ge 1.
 \end{equation}
 
The rest of the proof is analogous to \cite[Chapter V, Theorem 6.2]{gold}. We have
$T(r_{j+1}, f) \ge \frac{1}{\vep_j} (T(r_j, f) - N(r_j, 1/f))$, whence
$$
T(r_{j+1}, f) \ge 4k^{\mu_1}(2r_j^{\mu_1} - r_j^{\mu_2}) \ge 4k^{\mu_1} r_j^{\mu_1}.
$$
Since $r_j^{\mu_2} <T(r_j, f)/2$, we also have
$$
T(r_{j+1}, f) \ge 4k^{\mu_1}(T(r_j, f) - T(r_j, f)/2) = 2k^{\mu_1}T(r_j, f).
$$
By induction, making use of  \eqref{doha}, we obtain
$$
T(r_{j+l}, f) \ge \max\big(  4k^{l\mu_1}r_j^{\mu_1},  (2k^{\mu_1})^l T(r_j, f)\big).
$$
Since $r_{j+l} \le (1+\delta)^{3(1+\delta)(j+l) +2}$, we finally get 
$$
\limsup_{r\to\infty} \frac{\log T(r, f)}{\log r} \ge
\limsup_{l\to\infty} \frac{\log T(r_{j+l}, f)}{\log r_{j+l}} \ge \limsup_{l\to\infty}  \frac{l\log 2}{\log r_{j+l}} \ge
 \frac{\log 2}{3(1+\delta) \log(1+\delta)}.
$$
Choosing arbitrarily small $\delta$ we conclude that $f$ is of infinite order, a contradiction.
\qed
\bigskip


\section{Zero-free meromorphic functions and differential equations}
\label{difeq}

In this section we prove the statements relating functions of the form \eqref{mer2} with finitely many
zeros (or no zeros at all) and differential equations with polynomial coefficients.

\begin{proof}[Proof of Proposition \ref{diff}]
The meromorphic function $f/P$ has no zeros. Hence, $P/f$ is an entire function with zeros $\{t_n\}$ of multiplicity 
2. Therefore, there exists an entire function $g$ with simple zeros $\{t_n\}$ such that $P/f = g^2$.

Expanding $P/g^2$ near a point $t_n$ we get
$$
\begin{aligned}
f(z) & = \frac{P(z)}{g^2(z)} = \frac{P(t_n) + P'(t_n) (z-t_n) + O((z-t_n)^2)}{(g'(t_n)(z-t_n) + 
\frac{g''(t_n)}{2}(z-t_n)^2 +O((z-t_n)^3))^2} \\
& = \frac{P(t_n) + P'(t_n) (z-t_n) + O((z-t_n)^2)}{(g'(t_n))^2(z-t_n)^2} \Big(1-\frac{g''(t_n)}{g'(t_n)}(z-t_n) + O((z-t_n)^2)\Big) \\
& = \frac{P(t_n)}{(g'(t_n))^2}\cdot \frac{1}{(z-t_n)^2} +
\frac{P'(t_n) g'(t_n) - P(t_n)g''(t_n)}{(g'(t_n))^3} \cdot\frac{1}{z-t_n} + O(1).
\end{aligned}
$$
Since $f$ has zero residue at $t_n$, we conclude that $P'(t_n) g'(t_n) - P(t_n)g''(t_n)=0$ for all $n$ whence
$$
Pg'' - P'g' +Qg=0
$$
for some entire function $Q$. It remains to show that $Q$ is a polynomial.
Indeed, 
$$
Q  = P'\frac{g'}{g} - P\frac{g''}{g}.
$$
Since $f$, and hence $g$, is of finite order we have $m\big(r,\frac{g'}{g}\big) = O(\log r)$ 
and  $m\big(r,\frac{g''}{g}\big) = O(\log r)$ as $r\to \infty$. Therefore, $m(r,Q) = O(\log r)$ 
and so $Q$ is a polynomial. 
\end{proof}

\begin{remark}
{\rm Conversely, if an entire function $g$ with simple zeros $\{t_n\}$ satisfies the differential equation \eqref{fz} 
for some polynomials $P, Q$ and 
$$
\sum_n \frac{|P(t_n)|}{|g'(t_n)|^2(|t_n|^2+1)} <\infty,
$$
then 
$$
\frac{P(z)}{g^2(z)} = \sum_n \frac{P(t_n)}{(g'(t_n))^2(z-t_n)^2} +h(z),
$$
where $h$ is an entire function. To obtain a representation of $P/g^2$ in the form  \eqref{mer2} 
one has to prove that $h\equiv 0$. This is true, e.g., if 
$g$ is of finite order, its zeros lie on (or approach to) a finite system of rays and $g^2/P$ tends to infinity along any ray $L$
which is not parallel to a ray from this system.
In this case $h$ will tend to $0$ along $L$ and, thus, $h\equiv 0$ by the
Phragm\'en--Lindel\"of principle. For an example of such situation see the proof of the
expansion of the function $1/{\rm Bi}^2$ in Section \ref{exa}. }
\end{remark}

\begin{proof}[Proof of Corollary \ref{fer}]
Assume that $f$ has no zeros. Then $f=1/g^2$ for an entire function $g$ 
satisfying the equation 
$g'' +Qg=0$ for some polynomial $Q$ with degree $m$.
By a theorem of S.B.~Bank and I.~Laine \cite[Theorem 1]{bl} in the case when $m$ is odd
the convergence exponent of the zeros of $g$ (i.e., of the sequence $\mathcal{T} = \{t_n\}$)
equals $\frac{m+2}{2}$, while for an even $m$ 
either the convergence exponent of $\mathcal{T}$ is $\frac{m+2}{2}$ or $g$ has finitely many zeros. The latter case is impossible 
since the set $\mathcal{T}$ is assumed to be infinite. 
\end{proof}

In the proof of Theorem \ref{kr1} we will need the properties 
of entire functions whose inverses are represented as series of Cauchy kernels.
In 1947 M.G. Krein proved the following important result 
(see \cite[Theorem 4]{krein47} or \cite[Lecture 16]{lev}):
{\it Assume
that $g$ is an entire function, which is real on $\RR$,
with simple real zeros $t_n \ne 0$ and such that,
for some integer $k\ge 0$, we have
$$
\sum_n \frac{1}{|t_n|^{k+1} |g'(t_n)|}<\infty
$$
and 
\begin{equation}
\label{krein}
\frac{1}{g(z)} = \sum_n
\frac{1}{g'(t_n)} \cdot \bigg(\frac{1}{z-t_n} +\frac{1}{t_n}+
\cdots + \frac{z^{k-1}}{t_n^k} \bigg) + R(z), 
\end{equation}
where $R$ is some polynomial. 
Then $g$ is a function of finite exponential type and, moreover, of the Cartwright class.}
Krein \cite[Theorem 5]{krein47} (see also \cite[Lecture 16]{lev}) showed also that the condition 
$t_n\in\mathbb{R}$ can be relaxed to the Blaschke condition
$\sum_n |t_n|^{-2} |\ima t_n| <\infty$.

V.B.~Sherstyukov \cite{shers} extended this result as follows: {\it If $g$ is given by \eqref{krein}, where the set
$\{t_n\}$ is contained in some strip and has
a finite convergence exponent, then $g$ is a function of finite exponential type.} In \cite{abb} this result was 
generalized to entire functions representable as a ratio of two (regularized) Cauchy transforms. 

\begin{proof}[Proof of Theorem \ref{kr1}]
Assume that $f$ has no zeros. Then,
by Proposition \ref{diff}, $f=1/g^2$ where $g$ is an entire function of finite order with simple zeros $t_n$ 
and $g''+Qg=0$ for some polynomial $Q$. Since $\rho (g) =\rho(1/f) = \rho(f) = \frac{m+2}{2}$,  we have 
${\rm deg}\, Q =m$. If $m\ne 0$, then the zeros $t_n$ of $g$ 
approach the  critical rays (see \eqref{appr1} and \eqref{appr}).  Since the critical rays have minimal
angular distance $\frac{2\pi}{m+2}$
and all points from $\ttt$ except a finite number lie in the union of the angles
$\{|\arg z| <\alpha\} \cup \{|\arg z -\pi| <\alpha\} $ for some $\alpha\in (0, \frac{\pi}{m+2})$,
we conclude that $t_n$ approaches at most two of the critical rays. If $m$ is even, they are obviously two parts
of the same straight line. However, if $m$ is odd and the set $\{t_n\}$ accumulates to two critical rays, then it omits
exactly $m$ critical sectors, a contradiction since the number of omitted sectors must be even \cite[Theorem 5.4]{laine}.
Thus, in the case of odd $m$ the set $\{t_n\}$ acumulates to one critical ray only. Thus, it follows from
\eqref{appr} that  in any case the set
$\{t_n\}$ approaches some line and so $\{t_n\}$ is contained in some strip.

We also have 
$$
c_n = \frac{1}{(g'(t_n))^2} \qquad \text{and} \qquad \sum_n \frac{1}{|t_n|^2 |g'(t_n)|^2 } <\infty.
$$
Since $\{t_n\}$ has a finite convergence exponent, there exists $M\in\N$ such that $\sum_n \frac{1}{|t_n|^M |g'(t_n)|}<\infty$
and so
$$
\frac{1}{g(z)} = 
\sum_n \frac{1}{g'(t_n)} \cdot \bigg(\frac{1}{z-t_n} +\frac{1}{t_n}+
\cdots + \frac{z^{M-1}}{t_n^M} \bigg) +h(z),
$$
where $h$ is some entire function. 

We need to show that $h$ is a polynomial. Since $g$ is of finite order and  $\{t_n\}$ has a finite convergence 
exponent it is easy to show that $h$ is of finite order. 
Since $\{t_n\}$ lie in some strip $\Pi$ of width $L$, it is clear that $|f(z)| \lesssim |z|^2 +1$, $z\notin \widetilde{\Pi}$,
where $ \widetilde{\Pi}$ denotes the strip with the same central line and the width $2L$.
Indeed, let $|z|=r$, $z\notin  \widetilde{\Pi}$. Then
$$
|f(z)| \lesssim \sum_{|t_n|\le r/2} \frac{|c_n|}{|t_n|^2} + \sum_{|t_n|\ge 2r} \frac{|c_n|}{|t_n|^2} + 
\frac{r^2}{L^2}\sum_{r/2< |t_n| <2r} \frac{|c_n|}{|t_n|^2} \lesssim r^2+1.
$$
Similarly, there exists $K \ge 1$ such that
$$
\bigg|\sum_n \frac{1}{g'(t_n)} \cdot \bigg(\frac{1}{z-t_n} +\frac{1}{t_n}+
\cdots + \frac{z^{M-1}}{t_n^M} \bigg)\bigg| \lesssim |z|^K+1, \qquad z\notin  \widetilde{\Pi}.
$$
Thus, 
$$
(h(z) + O(|z|^K))^2 = O(|z|^2), \qquad z\notin  \widetilde{\Pi}, \ |z|\ge 1,
$$
and we conclude that $|h(z)| \lesssim |z|^K$ for $z\notin  \widetilde{\Pi}$. Since $h$ is of finite order, $h$ is a polynomial 
by the Phragm\'en--Lindel\"of principle. Thus, $g$ is of the form \eqref{krein} with $t_n$ in a strip and, by 
the result of Sherstyukov \cite{shers} cited above, $g$ is of order at most 1. Recall that $\rho(g) = \frac{{\rm deg}\, Q+2}{2}$,
and so $Q$ is a constant. Hence, $g = a\sin (bz-c)$ for some parameters $a,b,c$. 
\end{proof}
\bigskip


\section{Examples}
\label{exa}

Using \eqref{zfr} one can easily find examples where $f$ has a finite number of zeros by 
considering the function $\cos(z^N)$ with different $N$. E.g., 
if $u_n$ is one of the solutions of $u_n^2 = \frac{\pi}{2} +\pi n$, $n\in\Z$, then
$$
\frac{z}{\cos^2 (z^2)} = \sum_{n\in\Z} \frac{1}{4u_n}\bigg( \frac{1}{(z-u_n)^2} - \frac{1}{(z+u_n)^2} \bigg),
$$
while $g(z) = \cos(z^2)$ satisfies the equation
$$
zg'' - g' = -4z^3 g.
$$

Another example  is related to the equation $g''-zg = 0$, the simplest equation with a nonconstant 
coefficient. Two of its solutions are known as Airy functions ${\rm Ai}$  and ${\rm Bi}$ (see, e.g., \cite{as, olv}).
Both are entire functions of order $3/2$. All zeros
of ${\rm Ai}$ are real and negative while ${\rm Bi}$ has zeros $\{b_n\}_{n\ge 1}$ on the negative
semiaxis and two series of zeros $\{\beta_n\}_{n\ge 1}$ and  $\{\bar \beta_n\}_{n\ge 1}$ in $\{\pi/3 <\arg z <\pi/2\}$
and $\{-\pi/2 <\arg z <-\pi/3\}$ respectively, where $\bar \beta_n$ is the conjugate to $\beta_n$ and
the zeros in each series are ordered by increase of  the modulus.
The zeros $\beta_n$ approach the critical ray $\arg z= \pi/3$.

The function $\frac{1}{\rm Ai^2}$ cannot be represented as $\sum_n \frac{c_n}{(z-t_n)^2}$
since $|{\rm Ai}(z)|\to 0$ as $|z|\to \infty$ along any ray $\{\arg z = \alpha\}$ with $\pi/3<|\alpha| <\pi/2$.
Let us show that the function $\frac{1}{\rm Bi^2}$ does admit such expansion. It is known (see \cite[Chapter 10]{as}) that
$$
|b_n| \asymp |\beta_n| \asymp n^{2/3}, \qquad  |{\rm Bi}'(b_n)| \asymp   |{\rm Bi}'(\beta_n)|  \asymp n^{1/6}.
$$
Therefore the series $\sum_n |b_n {\rm Bi}'(b_n)|^{-2}$ converges and we can write
$$
\frac{1}{{\rm Bi}^2(z)} = \sum_n \bigg(
\frac{1}{({\rm Bi}'(b_n))^2(z-b_n)^2} + \frac{1}{({\rm Bi}'(\beta_n))^2(z-\beta_n)^2 }
+ \frac{1}{({\rm Bi}'(\bar \beta_n))^2(z- \bar \beta_n)^2 }
\bigg) +h(z)
$$
for some entire function $h$. It is easy to see that the function $h$ is of order at most $3/2$. Since 
$|{\rm Bi}(z)|\to \infty$ as $|z|\to \infty$ along any ray $\{\arg z = \alpha\}$, $\alpha \ne \pm \pi/3, \pi$, 
we conclude by the Phragm\'en--Lindel\"of principle that $h\equiv 0$.

This example shows that in the results of Hellerstein, Shen, Williamson or Steinmetz it is essential
that all elements of the fundamental system have their zeros near the real axis. It also shows the sharpness
of the conditions of Theorem \ref{kr1}.

\end{document}